\newcommand{\defi}[1]{\textsf{#1}} 
\newcommand{\Aff}{\mathbb{A}}
\newcommand{\F}{\mathbb{F}}
\newcommand{\PP}{\mathbb{P}}
\newcommand{\Q}{\mathbb{Q}}
\newcommand{\R}{\mathbb{R}}
\newcommand{\Z}{\mathbb{Z}}
\newcommand{\calA}{\mathcal{A}}
\newcommand{\calS}{\mathcal{S}}
\newcommand{\calX}{\mathcal{X}}
\DeclareMathOperator{\id}{id}
\DeclareMathOperator{\PrePer}{PrePer}
\DeclareMathOperator{\Res}{Res}
\DeclareMathOperator{\Spec}{Spec}
\newcommand{\tors}{{\operatorname{tors}}}
\newcommand{\injects}{\hookrightarrow}
\newtheorem{theorem}{Theorem}[section]
\newtheorem{proposition}[theorem]{Proposition}
\theoremstyle{definition}
\newtheorem{question}[theorem]{Question}
\theoremstyle{remark}
\newtheorem{remark}[theorem]{Remark}
\begin{document}

\title{Uniform boundedness of rational points and preperiodic points}
\subjclass[2010]{Primary 11G35; Secondary 37P15}
\keywords{Rational points, preperiodic points, uniform boundedness, Morton--Silverman conjecture}
\author{Bjorn Poonen}
\thanks{This research was supported by the Guggenheim Foundation and National Science Foundation grants DMS-0841321 and DMS-1069236.}
\address{Department of Mathematics, Massachusetts Institute of Technology, Cambridge, MA 02139-4307, USA}
\email{poonen@math.mit.edu}
\urladdr{\url{http://math.mit.edu/~poonen/}}
\date{July 2, 2012}

\begin{abstract}
We ask questions generalizing 
uniform versions of conjectures of Mordell and Lang
and combining them with the Morton--Silverman conjecture on preperiodic points.
We prove a few results relating different versions of such questions.
\end{abstract}

\maketitle

\section{Rational points}\label{S:rational}

\subsection{Uniform boundedness questions}

Our goal is to pose some questions about variation of 
the number of rational solutions in a family of polynomial equations.  
The most elementary question of this type we pose is the following:

\begin{question}
\label{Q:4}
For each $n \ge 1$, is there a number $B_n$ such that 
for every $f \in \Q[x_1,\ldots,x_n]$ of total degree~$4$
such that $f(x_1,\ldots,x_n)=0$ has finitely many rational solutions,
the number of solutions is less than or equal to $B_n$?
\end{question}

It will eventually turn out 
that already this question is equivalent to much more general questions,
namely the number field cases of 
Questions \ref{Q:main} and~\ref{Q:main D} 
(see Propositions \ref{P:equivalent} and~\ref{P:reduction to 4}).
In particular, a positive answer for degree~$4$ would imply 
a positive answer for arbitrary degree.

To motivate such questions, 
let us review some uniform boundedness questions in the literature.
If $X$ is a curve of genus $g>1$ over a number field $k$,
then $X(k)$ is finite~\cite{Faltings1983}.
Caporaso, Harris, and Mazur~\cite{Caporaso-Harris-Mazur1997} 
asked whether for each $g>1$ and each $k$, 
there is a constant $B_{g,k}$ such that $\#X(k) \le B_{g,k}$
for all $X$ of genus $g$ over $k$.
The answer is unknown even for $g=2$ and $k=\Q$.
Caporaso, Harris, and Mazur proved that a positive answer
would follow from the Bombieri--Lang conjecture that 
the $k$-rational points on a positive-dimensional variety of general type
are not Zariski dense.
Pacelli~\cite{Pacelli1997} generalized this to show that 
the Bombieri--Lang conjecture implies that the constant $B_{g,k}$ 
can be chosen to depend only on $g$ and $[k:\Q]$;
this would imply its generalization to finitely generated field
extensions of $\Q$
(i.e., function fields of varieties over number fields),
because a curve over a field $k$ of degree $d$ over $\Q(t_1,\ldots,t_n)$
can be specialized to a curve of the same genus 
over a number field of degree $d$ over $\Q$
having at least as many points.
Such results were generalized to higher-dimensional varieties 
for which all subvarieties are of general type:
see~\cites{Abramovich-Voloch1996,Abramovich1997}.

Our main question generalizes such questions 
to arbitrary families of varieties:

\begin{question}
\label{Q:main}
Let $k$ be a finitely generated extension of $\Q$.
Let $\pi \colon X \to S$ be a morphism of finite-type $k$-schemes.
For $s \in X(k)$, let $X_s$ be the fiber $\pi^{-1}(s)$.
Must $\{\#X_s(k) : s \in S(k)\}$ be finite?
\end{question}

If some $\#X_s(k)$ is infinite, that is OK: it contributes 
just the one element $\aleph_0$ to the set whose finiteness is in question.
So the question is really about the uniform boundedness of $\#X_s(k)$
for the $s \in S(k)$ for which $X_s(k)$ is finite.

We generalize further 
by considering points $s$ over finite extensions $L$ of fixed 
(or bounded) degree over $k$:

\begin{question}
\label{Q:main D}
Fix $k$ and $\pi \colon X \to S$ as in Question~\ref{Q:main}.
Let $D \ge 1$. 
Must $\{\#X_s(L) : [L:k] = D, \; s \in S(L)\}$ be finite?
\end{question}

\subsection{Variants}

\begin{question}
\label{Q:Zariski}
Under the hypotheses of Question~\ref{Q:main D}, 
let $z_s \in \Z_{\ge 0}$ be the number of irreducible components of the Zariski
closure of $X_s(L)$ in $X_s$.
Must $\{z_s : [L:k] = D, \; s \in S(L)\}$ be finite?
\end{question}

Given a finite-type $\Q$-scheme $X$, and a subset $A \subset X(\Q)$,
let $\overline{A}$ be the closure of $A$ 
in $X(\R)$ with respect to the Euclidean topology.
Mazur~\cite{Mazur1992} conjectured that the set of connected 
components of the topological space $\overline{X(\Q)}$ is finite
for every $X$.

\begin{question}
\label{Q:Mazur}
Under the hypotheses of Question~\ref{Q:main}, but with $k=\Q$,
let $c_s$ be the number of connected components of $\overline{X_s(\Q)}$.
Must $\{c_s:s \in S(\Q)\}$ be finite?
\end{question}

\subsection{Implications} \label{S:reductions}

Every finite-type $k$-scheme is a finite union of finite-type affine
$k$-schemes,
so each of Questions \ref{Q:main}, \ref{Q:Zariski}, and~\ref{Q:Mazur}
may be reduced to the case where $S$ and $X$ are affine.

Question~\ref{Q:Zariski} is stronger than Question~\ref{Q:main D}.
Question~\ref{Q:Mazur} is stronger than 
the $k=\Q$ case of Question~\ref{Q:main}.
Less trivial is the following:

\begin{proposition}
\label{P:equivalent}
For each finitely generated extension $k$ of $\Q$,
Questions~\ref{Q:main} and~\ref{Q:main D} are equivalent.
\end{proposition}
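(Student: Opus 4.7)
The direction Question~\ref{Q:main D} $\Rightarrow$ Question~\ref{Q:main} is immediate by specializing to $D = 1$. For the converse, assume Question~\ref{Q:main} holds for every morphism of finite-type $k$-schemes, and fix $\pi \colon X \to S$ and $D \ge 1$ as in Question~\ref{Q:main D}. My plan is to manufacture an auxiliary morphism $\tilde\pi \colon \tilde X \to \tilde S$ of finite-type $k$-schemes such that every $L$-point set $X_s(L)$ appearing in Question~\ref{Q:main D} is realized, via a natural bijection, as the fiber $\tilde X_{\tilde s}(k)$ over some $\tilde s \in \tilde S(k)$. Applying Question~\ref{Q:main} to $\tilde\pi$ then supplies the uniform bound for Question~\ref{Q:main D}. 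The driving idea is to use the primitive element theorem to encode an unknown degree-$D$ extension of $k$ as $k[z]/(f)$ for a separable monic $f \in k[z]$ of degree $D$, and to encode an $L$-point by tuples of polynomials in $k[z]_{< D}$ regarded as elements of $k[z]/(f)$.

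By Section~\ref{S:reductions}, we may assume $S \subset \Aff^n_k$ is cut out by $F_1, \ldots, F_r$, that $X \subset \Aff^m_k$ is cut out by further polynomials, and that $\pi$ is the restriction of a polynomial map $(\pi_1, \ldots, \pi_n) \colon \Aff^m_k \to \Aff^n_k$. Let $U \subset \Aff^D_k$ be the open subscheme where the monic polynomial $f(z) \colonequals z^D + a_1 z^{D-1} + \cdots + a_D$ is separable, so $L_f \colonequals k[z]/(f)$ is an étale $k$-algebra of rank $D$ with primitive element $\theta_f \colonequals z \bmod f$. Parametrize candidate $L_f$-points of $S$ by tuples $g = (g_1, \ldots, g_n)$ of polynomials in $k[z]_{< D}$ (i.e., by $\Aff^{nD}_k$) via $s_i \colonequals g_i(\theta_f)$, and impose the condition that the remainder of $F_\ell(g_1(z), \ldots, g_n(z))$ on division by $f(z)$ vanishes for every $\ell$. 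These are polynomial relations in the coefficients $(a, g)$ and define a locally closed subscheme $\tilde S \subset U \times \Aff^{nD}_k$. Introduce additional coordinates $h = (h_1, \ldots, h_m) \in \Aff^{mD}_k$ for polynomials $h_j \in k[z]_{<D}$, and cut out $\tilde X \subset \tilde S \times \Aff^{mD}_k$ by the equations expressing that $(h_j(\theta_f)) \in X(L_f)$ and that $\pi_i(h_1(z), \ldots, h_m(z)) \equiv g_i(z) \pmod{f(z)}$ for all $i$. Let $\tilde\pi \colon \tilde X \to \tilde S$ be the projection, a morphism of finite-type $k$-schemes.

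By construction, a point $\tilde s = (f, g) \in \tilde S(k)$ is precisely the datum of an étale $k$-algebra $L_f$ presented by $f$ together with a point $s \in S(L_f)$, and the fiber $\tilde X_{\tilde s}(k)$ parametrizes tuples $(h_j)$ whose values $(h_j(\theta_f))$ run through $X_s(L_f)$. Uniqueness of the representation of elements of $L_f$ in the basis $1, \theta_f, \ldots, \theta_f^{D-1}$ makes this a bijection $\tilde X_{\tilde s}(k) \leftrightarrow X_s(L_f)$. By the primitive element theorem, every pair $(L, s)$ with $L$ a field extension of $k$ of degree $D$ and $s \in S(L)$ arises as $(L_f, s)$ for some $(f, g) \in \tilde S(k)$, so
\[
\{\#X_s(L) : [L:k] = D,\; s \in S(L)\} \;\subseteq\; \{\#\tilde X_{\tilde s}(k) : \tilde s \in \tilde S(k)\},
\]
and finiteness of the right-hand set, provided by Question~\ref{Q:main} for $\tilde\pi$, yields Question~\ref{Q:main D}. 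The closest thing to an obstacle is bookkeeping: verifying that the remainder-mod-$f$ conditions genuinely cut out a finite-type $k$-scheme and that the fiber identification is the claimed bijection. Both are elementary once the coordinates are in place; no deeper issue arises.
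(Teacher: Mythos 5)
Your proof is correct, and the underlying idea is the same as the paper's: parametrize the degree-$D$ extensions of $k$ by the space of monic degree-$D$ polynomials $f$ over $\Aff^D_k$, replace each fiber $X_s$ over an extension $L=k[z]/(f)$ by its Weil restriction to $k$, and apply Question~\ref{Q:main} to the resulting family over $k$. The difference is one of implementation: the paper forms the universal finite scheme $T \to \Aff^D$ cut out by $t^D + a_{D-1}t^{D-1}+\cdots+a_0$ and invokes the general existence theorem for restriction of scalars \cite{Bosch-Lutkebohmert-Raynaud1990}*{7.6, Theorem~4} to produce $\Pi\colon\calX\to\calS$ abstractly, whereas you construct the Weil restriction by hand, in coordinates, via division with remainder modulo $f$ — restricting moreover to the open locus $U$ where $f$ is separable. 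That restriction is harmless in characteristic~$0$ (every degree-$D$ field extension is separable, hence reached), and it buys you a slightly more elementary, self-contained argument at the cost of coordinate bookkeeping; the paper's version works over all of $\Aff^D$, allowing non-étale algebras $L$, which are irrelevant but do no harm since one only needs the set in Question~\ref{Q:main D} to be \emph{contained} in the set produced by Question~\ref{Q:main}.
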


\begin{proof}
Question~\ref{Q:main} is the $D=1$ case of Question~\ref{Q:main D}.

For the reduction in the opposite direction,
fix an instance $\pi \colon X \to S$ of Question~\ref{Q:main D}.
We may assume that $X$ and $S$ are affine.
View
\[
	T \colonequals \Spec 
	\frac{k[a_{D-1},\ldots,a_0,t]}{(t^D + a_{D-1} t^{D-1} + \cdots a_0)},
\]
as a finite scheme over $\Aff^D = \Spec k[a_{D-1},\ldots,a_0]$.
The restrictions of scalars $\calX \colonequals \Res_{T/\Aff^D}(X \times_k T)$
and $\calS \colonequals \Res_{T/\Aff^D}(S \times_k T)$ 
exist~\cite{Bosch-Lutkebohmert-Raynaud1990}*{7.6, Theorem~4},
and $\pi$ induces $\Pi \colon \calX \to \calS$.
If $a \in \Aff^D(k)$, its fiber in $T$ defines a finite $k$-algebra $L$
(not necessarily a field); then each point $s' \in \calS(k)$ mapping to 
$a$ corresponds to a point $s \in S(L)$, and the fiber $\Pi^{-1}(s')$
equals $\Res_{L/k}(X_s)$, whose $k$-points are in bijection with $X_s(L)$.
Moreover, every degree-$D$ field extension $L$ of $k$ 
arises from some $a \in \Aff^D(k)$.
Thus a positive answer to Question~\ref{Q:main} for $\Pi$
would yield a positive answer to Question~\ref{Q:main D} for $\pi$.
\end{proof}

\begin{proposition}
\label{P:reduction to 4}
Question~\ref{Q:main} for number fields is equivalent to Question~\ref{Q:4}.
\end{proposition}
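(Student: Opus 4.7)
The plan is to prove both implications. For the forward direction, fix $n$ and specialize Question~\ref{Q:main} (with $k = \Q$) to the universal family $\pi \colon X \to S$ of degree-$4$ hypersurfaces in $\Aff^n$: let $S$ be the affine space of coefficient vectors and $X \subset S \times \Aff^n$ the universal zero locus. Then $\#X_s(\Q)$ equals the number of rational zeros of the polynomial corresponding to $s \in S(\Q)$, so finiteness of $\{\#X_s(\Q) : s \in S(\Q)\}$ yields the uniform bound $B_n$ demanded by Question~\ref{Q:4}.

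For the converse, I would reduce in two steps. First, reduce to $k = \Q$: given $\pi \colon X \to S$ over a number field $k$, assumed affine as in Section~\ref{S:reductions}, form $\Pi \colon \Res_{k/\Q}(X) \to \Res_{k/\Q}(S)$ over $\Q$ as in the proof of Proposition~\ref{P:equivalent}. Since $\Res_{k/\Q}(S)(\Q) = S(k)$ and the fiber over the point corresponding to $s \in S(k)$ is $\Res_{k/\Q}(X_s)$, whose $\Q$-points are in bijection with $X_s(k)$, a positive answer to Question~\ref{Q:main} over $\Q$ for $\Pi$ implies one over $k$ for $\pi$.

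Second, reduce Question~\ref{Q:main} for an affine family over $\Q$ to Question~\ref{Q:4}. Embed $X \hookrightarrow S \times \Aff^m$ as the common zero locus of finitely many functions $g_1, \ldots, g_r$, each of degree at most $d$ in the coordinates $x_1, \ldots, x_m$. Linearize by introducing a new variable $y_\alpha$ for each multi-index $\alpha \in \Z_{\ge 0}^m$ with $2 \le |\alpha| \le d$, intended to equal $x^\alpha$, together with a degree-$2$ relation $r_\alpha \colonequals y_\alpha - y_\beta x_i$ whenever $\alpha = \beta + e_i$ (with the convention $y_{e_j} \colonequals x_j$). Replacing each monomial $x^\alpha$ in $g_i$ by $y_\alpha$ produces $\tilde g_i(s, x, y)$ of degree $\le 1$ in $(x, y)$, so
\[
h(s, x, y) \colonequals \sum_i \tilde g_i(s, x, y)^2 + \sum_\alpha r_\alpha(x, y)^2
\]
has total degree $\le 4$ in a fixed number $n$ of $(x, y)$-variables (depending only on $m$ and $d$). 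For each $s \in S(\Q)$, the projection $(x, y) \mapsto x$ induces a bijection $V(h_s)(\Q) \isomto X_s(\Q)$, because each $r_\alpha = 0$ forces $y_\alpha = x^\alpha$ and a sum of rational squares vanishes only when each summand does. Question~\ref{Q:4} then gives $\#X_s(\Q) \le B_n$ whenever it is finite.

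I expect the main technical point to be verifying that this reduction produces a genuinely uniform family in which both the ambient dimension $n$ and the degree bound $4$ are independent of $s$. This is the case because the auxiliary variables $y_\alpha$ and the quadratic relations $r_\alpha$ depend only on $m$ and $d$; only the coefficients of $\tilde g_i$ vary with $s \in S(\Q)$.
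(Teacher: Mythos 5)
Your proposal is correct and follows essentially the same route as the paper: reduce to $k=\Q$ via restriction of scalars, reduce to affine $X$ and $S$, then use Skolem's trick (auxiliary variables encoding monomials, with quadratic defining relations) plus the sum-of-squares collapse to a single degree-$4$ equation. The only small point you skip is that Question~\ref{Q:4} asks for total degree \emph{exactly} $4$, while your $h$ can have degree less than $4$ (e.g., when every $g_i$ is linear so there are no $r_\alpha$); the paper patches this by adding $z^4$ in a fresh indeterminate $z$, which is non-negative on rationals and so does not change the rational solution count.
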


\begin{proof}
Applying Question~\ref{Q:main} with $k=\Q$ and $X \to S$
the universal family of degree~$4$ hypersurfaces in $\Aff^n$
yields Question~\ref{Q:4}.

Now consider the opposite direction.
The proof of Proposition~\ref{P:equivalent} shows that the 
Question~\ref{Q:main} for number fields 
is equivalent to Question~\ref{Q:main} for $\Q$.
The latter can be reduced to the case where $X$ and $S$ are affine.
To complete the proof, 
we show that each $X_s$ has the same number of rational points
as a certain affine hypersurface of degree~$4$ in $\Aff^n$
for some $n$ depending only on $X \to S$: 
each polynomial in the system defining $X_s$
can be rewritten as a system of equations of degree at most $2$
by Skolem's trick of introducing new indeterminates to represent the results of
intermediate steps in a calculation of a polynomial,
and the union of these systems can be collapsed into a single polynomial
by taking the sum of squares; 
if necessary, add in $z^4$ for a new indeterminate $z$ 
to ensure that the polynomial is of degree exactly~$4$.
The number of indeterminates used in this rewriting of $X_s$
is uniform in $s$.
\end{proof}

\subsection{Counterexamples} \label{S:counterexamples}

Question~\ref{Q:main} has a negative answer for some 
finitely generated fields of characteristic~$p>0$.
For example, if $k \colonequals \F_p(t)$ for some $p>2$,
then in the family of non-smooth curves $X_a \colon x-ax^p=y^p$,
the members with $a \in k-k^p$ have only finitely many $k$-points,
but their number is unbounded 
as $a$ varies~ \cite{Abramovich-Voloch1996}*{Theorem~4.1}.
For another family, this time consisting of smooth curves,
see~\cite{Conceicao-Ulmer-Voloch2012}.

We do not know of ``natural'' fields of characteristic~$0$
for which the answer to Question~\ref{Q:main} is negative,
but we can artificially construct such fields:

\begin{proposition}
\label{P:artificial field}
There exists a countable field $k$ of characteristic~$0$
for which Question~\ref{Q:main} has a negative answer.
\end{proposition}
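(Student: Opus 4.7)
The plan is to build $k$ as a countable ascending union of number fields in which a fixed family of genus~$2$ curves over~$\Q$ has fibers with finite but unbounded numbers of $k$-rational points.  Fix, concretely, the family $\pi \colon X \to S$ over~$\Q$ with $S \colonequals \Aff^1 \setminus \{0,1,2,3\}$ and $X_s \colon y^2 = x(x-1)(x-2)(x-3)(x-s)$, a smooth projective family of genus~$2$ curves.  I will construct inductively a tower of number fields $\Q = k_0 \subset k_1 \subset k_2 \subset \cdots$ together with pairwise distinct parameters $s_1, s_2, \ldots \in \Q \setminus \{0,1,2,3\}$ satisfying
\begin{enumerate}
\item[(a)] $\#X_{s_n}(k_n) \ge n$; and
\item[(b)] $X_{s_i}(k_n) = X_{s_i}(k_i)$ for all $1 \le i < n$.
\end{enumerate}
Setting $k \colonequals \bigcup_n k_n$ gives a countable field of characteristic~$0$.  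Iterating (b) yields $X_{s_i}(k) = X_{s_i}(k_i)$, which is finite by Faltings' theorem over the number field~$k_i$, while (a) gives $\#X_{s_n}(k) \ge n$.  Hence $\{\#X_s(k) : s \in S(k)\}$ is an infinite set of distinct finite cardinalities, yielding a negative answer to Question~\ref{Q:main} for this~$k$.

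For the inductive step at stage~$n$, given $k_{n-1}$ and $s_1, \ldots, s_{n-1}$, I pick a fresh $s_n \in \Q \setminus \{0,1,2,3,s_1,\ldots,s_{n-1}\}$ and choose $n$ elements $a_1, \ldots, a_n \in k_{n-1}$.  Setting $d_a \colonequals a(a-1)(a-2)(a-3)(a-s_n)$, the multi-quadratic extension $k_n \colonequals k_{n-1}(\sqrt{d_{a_1}}, \ldots, \sqrt{d_{a_n}})$ automatically contains the points $(a_j, \sqrt{d_{a_j}}) \in X_{s_n}(k_n)$, giving~(a).  Condition~(b) reduces to the demand that the subgroup $\langle d_{a_1}, \ldots, d_{a_n} \rangle \subset k_{n-1}^\times/(k_{n-1}^\times)^2$ avoid (except at the identity) a countable ``forbidden'' subset, consisting of the square classes attached to residue fields of non-$k_{n-1}$-rational quadratic closed points of the earlier $X_{s_i}$, plus similar constraints coming from higher-degree multi-quadratic closed points of those curves (non-multi-quadratic residue fields cannot embed into $k_n$ and so pose no threat).

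The main obstacle is verifying that such $a_1, \ldots, a_n$ can always be chosen.  This is an avoidance problem in the countably infinite $\F_2$-vector space $k_{n-1}^\times/(k_{n-1}^\times)^2$: I select the $a_j$ one at a time, at each step using the infinitude of the image of $a \mapsto d_a$ modulo squares to steer clear of the countably many cosets of the forbidden set determined by products with previously chosen $d_{a_i}$.  That the image of $a \mapsto d_a$ is not itself contained in the forbidden set---equivalently, that not every quadratic extension of $k_{n-1}$ picks up a new rational point of some earlier $X_{s_i}$---can be ensured by picking $s_n$ generically, so that the square-class polynomial $d_a$ for the new curve is sufficiently independent of those of the previously handled curves.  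Making this avoidance rigorous, including the bookkeeping for higher-degree multi-quadratic closed points, is the technical core of the construction.
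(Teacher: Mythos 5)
Your overall strategy -- build $k$ as a countable ascending union $\Q = k_0 \subset k_1 \subset \cdots$ of number fields so that a chosen sequence of genus-$2$ curves $X_{s_1}, X_{s_2}, \ldots$ has $\#X_{s_i}(k) = \#X_{s_i}(k_i)$ finite (by Faltings over the number field $k_i$) and $\ge i$ -- is a genuinely different route from the paper. The paper instead takes $k$ to be the union of the function fields $K_i$ of $Y_i := X_1 \times X_2^2 \times \cdots \times X_i^i$, where $X_1, X_2, \ldots$ range over all genus-$2$ curves over $\Q$; the point is that in characteristic zero every nonconstant map $Y_j \to X_i$ factors through a projection $Y_j \to X_i$ followed by an automorphism (because a nonconstant morphism between genus-$2$ curves is an isomorphism and $\Aut(X_i)$ is finite), so $i \le \#X_i(k) < \infty$ comes for free from rigidity, with no avoidance argument at all. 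Your approach, if completed, would have the bonus of producing a $k$ that is algebraic over $\Q$; the paper's approach trades that off for a much shorter proof.

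However, your proof has a genuine gap exactly where you flag it. The inductive step requires choosing $a_1,\ldots,a_n \in k_{n-1}$ so that the span $W := \langle [d_{a_1}],\ldots,[d_{a_n}]\rangle \subset k_{n-1}^\times/(k_{n-1}^\times)^2$ contains no nonzero element of the forbidden set $F$ (the union of the $V_P$ over multi-quadratic closed points $P$ of $X_{s_1},\ldots,X_{s_{n-1}}$). You argue this is possible because the image of $a \mapsto [d_a]$ is infinite while $F$ is countable -- but both sets are countably infinite, so ``infinitude'' does not let you steer clear; a countably infinite image could a priori be entirely contained in $F$. The natural way to control this -- bounding the number of $a$ with $[d_a]$ equal to a fixed class, via counting points on quadratic twists of $X_{s_n}$ -- runs into precisely the uniform boundedness that is the open question under discussion, so there is no obvious cheap fix. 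Nor does ``picking $s_n$ generically'' obviously help, since $F$ is a fixed countable set depending on $k_{n-1}$ and $s_1,\ldots,s_{n-1}$, and ``generic'' over a countable base doesn't mean much without a positivity or thinness statement you haven't supplied. Some finer arithmetic input (e.g.\ controlling ramification of the classes $[d_a]$ and of the residue fields $\kappa(P)$ simultaneously) would be needed to close this, and it is not clear it can be done; this is where your proposal stops short of a proof.
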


\begin{proof}
Let $X_1,X_2,\ldots$ be representatives 
for the isomorphism classes of the smooth projective geometrically
integral curves of genus~$2$ over $\Q$.
Let $Y_i \colonequals X_1 \times X_2^2 \times \cdots X_i^i$.
Let $K_i$ be the function field of $Y_i$.
The projection $Y_{i+1} \to Y_i$ induces an injection $K_i \injects K_{i+1}$.
Let $k = \varinjlim K_i$.

For each $i$, composing any of the $i$ projections $Y_i \to X_i$
with an automorphism of $X_i$ yields an element of 
$X_i(Y_i) = X_i(K_i) \subseteq X_i(k)$.
Since any nonconstant morphism
between genus~$2$ curves in characteristic~$0$ is an isomorphism
and the automorphism group of a genus~$2$ curve is finite,
any nonconstant morphism $Y_j \to X_i$ for $j \ge i$ 
factors through one of the projections $Y_j \to X_i$
and hence corresponds to an already-constructed point of $X_i(k)$.

Thus $i \le \# X_i(k) < \infty$ for each $i$,
so Question~\ref{Q:main} for a versal family of genus~$2$ curves over $k$
has a negative answer.
\end{proof}

\section{Torsion points on abelian varieties}\label{S:torsion}

If $A$ is an abelian variety over a number field $k$,
then the torsion subgroup $A(k)_{\tors}$ is finite: 
this is a small part of the Mordell--Weil theorem~\cite{Weil1929},
and can be proved using height functions or $p$-adic methods.
This suggests the following well-known question:

\begin{question}
\label{Q:torsion}
Is there a bound on $\#A(k)_{\tors}$ depending only on $\dim A$ and $[k:\Q]$?
\end{question}

For $\dim A=1$, the answer is 
yes~\cites{Mazur1977,Kamienny-Mazur1995,Merel1996}.
For $\dim A>1$, there are only partial results:
see~\cite{Cadoret-Tamagawa2011}, which also considers the geometric analogue.
If the answer is yes, then the answer is yes also over finitely generated
extensions $k$ of $\Q$:
restriction of scalars lets us reduce to the case $k=\Q(t_1,\ldots,t_n)$,
and then specialization lets us remove one indeterminate at a time
without enlarging the torsion subgroup.

\begin{remark}
Uniform boundedness for the number of rational points on 
curves of genus $g>1$ over a finitely generated extension $k$ of $\Q$
for each $g$ and $k$ would 
imply a positive answer to Question~\ref{Q:main} for all families
$X \to S$ whose fibers are of dimension at most~$1$.
Indeed, the geometry of the singularities and
$0$-dimensional components of the fibers is uniformly bounded,
and irreducible components that are not geometrically irreducible
have rational points constrained to the singular locus,
so it would suffice to prove uniform boundedness of $C(k)$
for smooth projective geometrically integral curves $C$ of bounded genus
with finitely many $k$-points.
If $C$ has genus~$0$, then $C(k)$ is empty or infinite.
If $C$ has genus~$1$, then $C(k)$ is empty, infinite, or
of cardinality bounded by the previous paragraph.
If $C$ has genus greater than $1$, then our hypothesis applies.
\end{remark}

\section{Preperiodic points}\label{S:preperiodic}

Given a morphism $f \colon X \to X$ of $k$-schemes,
a point in $X(k)$ is called \defi{preperiodic} 
if its forward trajectory is finite;
let $\PrePer(f,k)$ be the set of such points.
Northcott~\cite{Northcott1950} invented the theory of height functions
to prove that if $k$ is a number field 
and $f \colon \PP^n \to \PP^n$ is a morphism of degree $d \ge 2$ over $k$,
then $\PrePer(f,k)$ is finite.
The Morton--Silverman conjecture~\cite{Morton-Silverman1994}*{p.~100} 
predicts that $\#\PrePer(f,k)$ is bounded by a constant depending
only on $n$, $d$, and $[k:\Q]$.

\begin{remark}
\label{R:Fakhruddin}
Morton and Silverman observed that applying their conjecture
to the morphism $\PP^1 \to \PP^1$ induced by multiplication-by-$2$
on the $x$-coordinate of an elliptic curve $A$
yields uniform boundedness of torsion points on elliptic curves over number
fields.
By~\cite{Fakhruddin2003}*{Corollary~2.4}, 
the Morton--Silverman conjecture also implies
a positive answer to Question~\ref{Q:torsion} 
for abelian varieties of arbitrary dimension.
\end{remark}

We may now ask the analogue of Question~\ref{Q:main D}
for rational preperiodic points:

\begin{question}
\label{Q:main preperiodic}
Let $k$ be a finitely generated extension of $\Q$.
Let $\pi \colon X \to S$ be a morphism of finite-type $k$-schemes.
Let $f \colon X \to X$ be an $S$-morphism.
If $L$ is a finite extension of $k$ and $s \in S(L)$, 
let $X_s \colonequals \pi^{-1}(s)$
and let $f_s \colon X_s \to X_s$ be the restriction of $f$ to $X_s$.
Let $D \ge 1$.
Must $\{\#\PrePer(f_s,L) : [L:k] = D, \; s \in S(L)\}$ be finite?
\end{question}

\subsection{Variants}

Questions \ref{Q:Zariski} and~\ref{Q:Mazur}
also admit analogues in which $X_s(L)$ is replaced by $\PrePer(f_s,L)$.

\subsection{Implications}

Taking $f=\id$ in Question~\ref{Q:main preperiodic} 
yields Question~\ref{Q:main D}.

Question~\ref{Q:main preperiodic} for the universal
family of degree-$d$ self-maps $\PP^n \to \PP^n$
is equivalent to the Morton--Silverman conjecture,
so by Remark~\ref{R:Fakhruddin}, 
a positive answer to Question~\ref{Q:main preperiodic} would imply 
a positive answer to Question~\ref{Q:torsion}.
In fact, a positive answer to Question~\ref{Q:main preperiodic} 
also implies a positive answer to Question~\ref{Q:torsion} directly:
use Zarhin's trick~\citelist{\cite{Zarhin1974-trick}; \cite{Milne-Abelian1986}*{Remark~16.12}} 
to reduce to the case of principally polarized
abelian varieties of a fixed dimension ($8$ times as large), 
for which a versal family $\calA \to S$ exists,
and then apply Question~\ref{Q:main preperiodic}
to the $S$-morphism $[2] \colon \calA \to \calA$.

As in Section~\ref{S:reductions}, Question~\ref{Q:main preperiodic}
can be reduced to the case in which $S$ is affine,
but it is not clear whether we can assume also that $X$ is affine,
since $X$ might not be a union of \emph{$f$-stable} affine subschemes.
Because of this, 
the analogue of Proposition~\ref{P:equivalent}
for preperiodic points is weakened slightly to ensure that
the restrictions of scalars in its proof exist without first making
$X$ affine:

\begin{proposition}
\label{P:equivalent preperiodic}
Let $k$ be a finitely generated extension of $\Q$.
If the answer to Question~\ref{Q:main preperiodic} 
for quasi-projective schemes over $k$ is positive for $D=1$,
then it is positive also for arbitrary~$D$.
\end{proposition}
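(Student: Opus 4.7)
The plan is to adapt the Weil-restriction argument from Proposition~\ref{P:equivalent}, carrying the self-morphism $f$ along at every step. The essential new issue is that, unlike in the situation of Section~\ref{S:reductions}, we cannot first reduce to the affine case: since $X$ need not admit a covering by $f$-stable affine open subschemes, the Weil restriction must be performed on $X$ itself, and this is the reason the statement requires $X$ (and $S$) to be quasi-projective.

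Concretely, I fix an instance $\pi\colon X\to S$, $f\colon X\to X$ of Question~\ref{Q:main preperiodic} with $X$ and $S$ quasi-projective over $k$, and an integer $D\ge 1$. Mirroring Proposition~\ref{P:equivalent}, I set
\[
T \colonequals \Spec \frac{k[a_{D-1},\ldots,a_0,t]}{(t^D + a_{D-1}t^{D-1}+\cdots+a_0)},
\]
a finite flat $\Aff^D$-scheme of degree $D$, and define $\calX \colonequals \Res_{T/\Aff^D}(X\times_k T)$ and $\calS\colonequals\Res_{T/\Aff^D}(S\times_k T)$. Because $X\times_k T$ and $S\times_k T$ are quasi-projective over $T$, these Weil restrictions exist and are quasi-projective over $\Aff^D$ by~\cite{Bosch-Lutkebohmert-Raynaud1990}*{7.6, Theorem~4}. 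Then $\pi$ induces $\Pi\colon\calX\to\calS$, and functoriality of Weil restriction applied to the $T$-morphism $f\times\id_T$ yields an $\calS$-morphism $F\colon\calX\to\calX$; this produces a new quasi-projective instance $(\Pi,F)$ of Question~\ref{Q:main preperiodic}.

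To conclude, I would verify exactly as in Proposition~\ref{P:equivalent} that, for $a\in\Aff^D(k)$ with fiber $T_a=\Spec L$ a finite $k$-algebra, and for $s'\in\calS(k)$ lying over $a$ (corresponding to some $s\in S(L)$), the fiber $\Pi^{-1}(s')$ equals $\Res_{L/k}(X_s)$, whose $k$-points biject canonically with $X_s(L)$; moreover, by construction $F$ acts on these $k$-points as $f_s$ acts on $X_s(L)$, so the bijection restricts to one between $\PrePer(F_{s'},k)$ and $\PrePer(f_s,L)$. Since every degree-$D$ field extension $L$ of $k$ has the form $k[t]/(p(t))$ for some irreducible monic $p$ of degree $D$, and hence arises as $T_a$ for a suitable $a\in\Aff^D(k)$, applying the hypothesized $D=1$ case of Question~\ref{Q:main preperiodic} to the quasi-projective instance $(\Pi,F)$ gives the desired finiteness for $(\pi,f)$ at level $D$.

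The only nontrivial point — and the source of the ``quasi-projective'' qualifier — is ensuring the existence of the Weil restrictions $\calX$ and $\calS$ without first passing to an affine cover. Once BLR produces them as quasi-projective $\Aff^D$-schemes and functoriality supplies $F$, the remainder is a routine transfer of the argument used for Proposition~\ref{P:equivalent}.
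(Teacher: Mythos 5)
Your proof is correct and follows essentially the same route as the paper's: perform the Weil restriction along the universal finite flat cover $T\to\Aff^D$, use functoriality of $\Res$ to carry $f$ to an $\calS$-morphism $F$, observe that $\PrePer(F_{s'},k)$ corresponds to $\PrePer(f_s,L)$ under the identification $\calX_{s'}(k)\simeq X_s(L)$, and invoke the $D=1$ hypothesis for the quasi-projective instance $(\Pi,F)$. The extra details you supply (that $\calX,\calS$ are quasi-projective, and that $F$ restricts to $f_s$ under the bijection) are exactly the points the paper leaves implicit.
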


\begin{proof}
Let $\pi \colon X \to S$ and $f \colon X \to X$ 
be an instance of Question~\ref{Q:main preperiodic} for a given $k$ and $D$.
We can no longer assume that $X$ is affine,
but since $X$ and $S$ are quasi-projective,
\cite{Bosch-Lutkebohmert-Raynaud1990}*{7.6, Theorem~4}
still applies to let us construct
$\Pi \colon \calX \to \calS$ as in the proof of 
Proposition~\ref{P:equivalent},
and we also obtain an $\calS$-morphism $F \colon \calX \to \calX$.
Each $s' \in \calS(k)$, corresponds to a finite $k$-algebra $L$
with a point $s \in S(L)$,
and $\PrePer(F_{s'},k) \subseteq \calX_{s'}(k)$ 
corresponds to $\PrePer(f_s,L) \subseteq X_s(L)$.
So a positive answer to Question~\ref{Q:main preperiodic} 
for $(\Pi,F,1)$ would yield a positive answer for $(\pi,f,D)$.
\end{proof}

\section*{Acknowledgements} 

I thank Doug Ulmer and Paul Vojta for a comment.

\begin{bibdiv}
\begin{biblist}


\bib{Abramovich1997}{article}{
  author={Abramovich, Dan},
  title={A high fibered power of a family of varieties of general type dominates a variety of general type},
  journal={Invent. Math.},
  volume={128},
  date={1997},
  number={3},
  pages={481--494},
  issn={0020-9910},
  review={\MR {1452430 (98e:14034)}},
  doi={10.1007/s002220050149},
}

\bib{Abramovich-Voloch1996}{article}{
  author={Abramovich, Dan},
  author={Voloch, Jos{\'e} Felipe},
  title={Lang's conjectures, fibered powers, and uniformity},
  journal={New York J. Math.},
  volume={2},
  date={1996},
  pages={20--34, electronic},
  issn={1076-9803},
  review={\MR {1376745 (97e:14031)}},
}

\bib{Bosch-Lutkebohmert-Raynaud1990}{book}{
  author={Bosch, Siegfried},
  author={L{\"u}tkebohmert, Werner},
  author={Raynaud, Michel},
  title={N\'eron models},
  series={Ergebnisse der Mathematik und ihrer Grenzgebiete (3) [Results in Mathematics and Related Areas (3)]},
  volume={21},
  publisher={Springer-Verlag},
  place={Berlin},
  date={1990},
  pages={x+325},
  isbn={3-540-50587-3},
  review={\MR {1045822 (91i:14034)}},
}

\bib{Cadoret-Tamagawa2011}{article}{
  author={Cadoret, Anna},
  author={Tamagawa, Akio},
  title={On a weak variant of the geometric torsion conjecture},
  journal={J. Algebra},
  volume={346},
  date={2011},
  pages={227--247},
  issn={0021-8693},
  review={\MR {2842079}},
  doi={10.1016/j.jalgebra.2011.09.002},
}

\bib{Caporaso-Harris-Mazur1997}{article}{
  author={Caporaso, Lucia},
  author={Harris, Joe},
  author={Mazur, Barry},
  title={Uniformity of rational points},
  journal={J. Amer. Math. Soc.},
  volume={10},
  date={1997},
  number={1},
  pages={1--35},
  issn={0894-0347},
  review={\MR {1325796 (97d:14033)}},
  doi={10.1090/S0894-0347-97-00195-1},
}

\bib{Conceicao-Ulmer-Voloch2012}{article}{
  author={Concei\c {c}\~{a}o, Ricardo},
  author={Ulmer, Douglas},
  author={Voloch, Jos\'e Felipe},
  title={Unboundedness of the number of rational points on curves over function fields},
  journal={New York J. Math.},
  volume={18},
  date={2012},
  pages={291--293},
}

\bib{Fakhruddin2003}{article}{
  author={Fakhruddin, Najmuddin},
  title={Questions on self maps of algebraic varieties},
  journal={J. Ramanujan Math. Soc.},
  volume={18},
  date={2003},
  number={2},
  pages={109--122},
  issn={0970-1249},
  review={\MR {1995861 (2004f:14038)}},
}

\bib{Faltings1983}{article}{
  author={Faltings, G.},
  title={Endlichkeitss\"atze f\"ur abelsche Variet\"aten \"uber Zahlk\"orpern},
  language={German},
  journal={Invent. Math.},
  volume={73},
  date={1983},
  number={3},
  pages={349\ndash 366},
  issn={0020-9910},
  review={\MR {718935 (85g:11026a)}},
  translation={ title={Finiteness theorems for abelian varieties over number fields}, booktitle={Arithmetic geometry (Storrs, Conn., 1984)}, pages={9\ndash 27}, translator = {Edward Shipz}, publisher={Springer}, place={New York}, date={1986}, note={Erratum in: Invent.\ Math.\ {\bf 75} (1984), 381}, },
}

\bib{Kamienny-Mazur1995}{article}{
  author={Kamienny, S.},
  author={Mazur, B.},
  title={Rational torsion of prime order in elliptic curves over number fields},
  note={With an appendix by A. Granville; Columbia University Number Theory Seminar (New York, 1992)},
  journal={Ast\'erisque},
  number={228},
  date={1995},
  pages={3, 81--100},
  issn={0303-1179},
  review={\MR {1330929 (96c:11058)}},
}

\bib{Mazur1977}{article}{
  author={Mazur, B.},
  title={Modular curves and the Eisenstein ideal},
  journal={Inst. Hautes \'Etudes Sci. Publ. Math.},
  number={47},
  date={1977},
  pages={33--186 (1978)},
  issn={0073-8301},
  review={\MR {488287 (80c:14015)}},
}

\bib{Mazur1992}{article}{
  author={Mazur, Barry},
  title={The topology of rational points},
  journal={Experiment. Math.},
  volume={1},
  date={1992},
  number={1},
  pages={35\ndash 45},
  issn={1058-6458},
  review={\MR {1181085 (93j:14020)}},
}

\bib{Merel1996}{article}{
  author={Merel, Lo{\"{\i }}c},
  title={Bornes pour la torsion des courbes elliptiques sur les corps de nombres},
  language={French},
  journal={Invent. Math.},
  volume={124},
  date={1996},
  number={1-3},
  pages={437\ndash 449},
  issn={0020-9910},
  review={\MR {1369424 (96i:11057)}},
}

\bib{Milne-Abelian1986}{article}{
  author={Milne, J. S.},
  title={Abelian varieties},
  conference={ title={Arithmetic geometry}, address={Storrs, Conn.}, date={1984}, },
  book={ publisher={Springer}, place={New York}, },
  date={1986},
  pages={103--150},
  review={\MR {861974}},
}

\bib{Morton-Silverman1994}{article}{
  author={Morton, Patrick},
  author={Silverman, Joseph H.},
  title={Rational periodic points of rational functions},
  journal={Internat. Math. Res. Notices},
  date={1994},
  number={2},
  pages={97--110},
  issn={1073-7928},
  review={\MR {1264933 (95b:11066)}},
  doi={10.1155/S1073792894000127},
}

\bib{Northcott1950}{article}{
  author={Northcott, D. G.},
  title={Periodic points on an algebraic variety},
  journal={Ann. of Math. (2)},
  volume={51},
  date={1950},
  pages={167--177},
  issn={0003-486X},
  review={\MR {0034607 (11,615c)}},
}

\bib{Pacelli1997}{article}{
  author={Pacelli, Patricia L.},
  title={Uniform boundedness for rational points},
  journal={Duke Math. J.},
  volume={88},
  date={1997},
  number={1},
  pages={77--102},
  issn={0012-7094},
  review={\MR {1448017 (98b:14020)}},
  doi={10.1215/S0012-7094-97-08803-7},
}

\bib{Weil1929}{article}{
  author={Weil, Andr{\'e}},
  title={L'arithm\'etique sur les courbes alg\'ebriques},
  language={French},
  journal={Acta Math.},
  volume={52},
  date={1929},
  number={1},
  pages={281--315},
  issn={0001-5962},
  review={\MR {1555278}},
  doi={10.1007/BF02547409},
}

\bib{Zarhin1974-trick}{article}{
  author={Zarhin, Ju. G.},
  title={A remark on endomorphisms of abelian varieties over function fields of finite characteristic},
  language={Russian},
  journal={Izv. Akad. Nauk SSSR Ser. Mat.},
  volume={38},
  date={1974},
  pages={471--474},
  issn={0373-2436},
  review={\MR {0354689 (50 \#7166)}},
}

\end{biblist}
\end{bibdiv}

\end{document}